\newtheorem{theorem}{Theorem}[section]
\newtheorem{corollary}[theorem]{Corollary}
\newtheorem{proposition}[theorem]{Proposition}
\theoremstyle{remark}
\theoremstyle{definition}
\newtheorem{definition}[theorem]{Definition}
\numberwithin{equation}{section} \makeatother
\begin{document}

\title[On a generalization of $W^*$-modules]{On a
generalization of $W^*$-modules}

\author{David P Blecher}

\address{Department of Mathematics, University of Houston, Houston, TX
77204-3008}

\author{Jon E Kraus}
\address{Department of Mathematics, State University of
New York at Buffalo, Buffalo, New York 14260-2900} \email[David P.
Blecher]{dblecher@math.uh.edu} \email[Jon E.
Kraus]{mthjek@acsu.buffalo.edu}

\date{\today}




\begin{abstract}  In
a recent paper of the first author and Kashyap,
a new class of modules over dual operator algebras is
introduced.  These generalize the $W^*$-modules
(that is, Hilbert $C^*$-modules over a von Neumann algebra
which satisfy an analogue of the Riesz representation
theorem for Hilbert spaces), which in turn generalize the
theory of Hilbert spaces.
In the present paper we give several
new results about these modules.
\end{abstract}

\maketitle

\section{Introduction and Notation}

We begin with two definitions of great importance in $C^*$-algebra theory,
 which may be found in more detail in \cite{Pas} for example.
A (Hilbert) $C^*$-module is a right module over a $C^*$-algebra $A$
 with an $A$-valued inner product satisfying a variant of the
usual axioms for a Hilbert space inner product.
A {\em $W^*$-module} is a $C^*$-module over a von Neumann algebra
which satisfies an analogue of the Riesz representation
theorem for Hilbert spaces.  Such spaces are far reaching and profound
generalizations of Hilbert space.
An earlier work \cite{Bsd} attempted to treat
$W^*$-modules in the framework of dual Banach modules where this
is possible, and where not possible then using the `operator space' variant of
dual Banach modules.  Recently, the first author and
Kashyap \cite{BK2} generalized the notion, and a large part
 of the theory, of $W^*$-modules to
the setting of `dual operator algebras' more general than von Neumann
algebras.  The modules introduced there are called {\em $w^*$-rigged modules}.
The present paper continues this work.
In Section 2, which is pedagogical in nature in keeping
with the nature of this volume, we motivate
the definition of $w^*$-rigged modules by
sketching a proof that $w^*$-rigged modules over
von Neumann
algebras are precisely the $W^*$-modules.  Indeed we give here
a simplification of the
main part of the proof from \cite{BK2}.  In Section 3,
we prove that tensoring with a $w^*$-rigged module
is bifunctorial in a certain sense.
As an application, we complete the functorial
characterization of Morita equivalence,
 sometimes called `Morita's fundamental theorem',  in our setting
of dual operator algebras.  One direction of this
characterization  is the main theorem
in \cite{UK1}, the other direction, from \cite{BK1}, was incomplete.
In Section 4 we initiate the study of an interesting class of $w^*$-rigged modules,
and prove an analogue of Paschke's powerful
characterization of $W^*$-modules as complemented submodules of `free' modules
\cite{Pas}.
  We also give a short proof of a variant of the main theorem from
\cite{EP}, the stable isomorphism theorem.
Finally, Section 5 characterizes the structure of surjective weak* continuous
linear complete
isometries between  bimodules, generalizing the well known `noncommutative
Banach-Stone' theorem for complete
isometries between operator algebras.    At first sight, it is not clear what
a structure theorem for linear isomorphisms $T$
between bimodules might look like.
Some thought and experience reveals that the theorem
one wants is precisely, or may be summarized succinctly by saying,
 that $T$ is the `restriction to the $1$-$2$ corner' of
a surjective (completely
isometric in our case)
homomorphism between the `linking algebras', which maps each of the 4 corners
to the matching corner.      We recall
that the linking algebra of a bimodule $Y$ is an (operator) algebra which consists
of $2 \times 2$ matrices whose four entries (`corners') are $Y$, its `dual module'
$X = \tilde{Y}$,
and the two algebras acting on the left and the right (see \cite[Section 4]{BK1} and 3.2
in \cite{BK2} for more details in our setting).
These linking algebras are a fundamental tool, being an  operator algebra
whose product encapsulates all the `data' of the module.  This `lifting of
$T$ to the linking algebra' is in the spirit of Solel's theorem
from  \cite{Sol} concerning isometries between $C^*$-modules.

We will assume from Section 3 onwards that the
reader has looked at the papers \cite{BK1,BK2}, in which also
further background and information may be found.   We also assume that the
reader is familiar with a few basic definitions from {\em operator space
theory}, as may be found in e.g.\ \cite{BLM,Pn}.
 In particular, we assume
that the reader knows what a dual operator space is,
and is familiar with basic Banach space and operator
space duality principles (as may be found for example within
  \cite[Section 1.4, 1.6,
Appendix A.2]{BLM}). We will often abbreviate `weak*' to `$w^*$'.
Unless indicated otherwise, throughout the paper $M$ denotes a {\em
dual operator algebra}, by which we mean a
weak* closed subalgebra of $B(H)$, the bounded operators on a
Hilbert space $H$.
We take all dual operator algebras $M$ to be
{\em unital}, that is we assume they possess an identity of norm
$1$.   Dual operator algebras may also be characterized abstractly
(see e.g.\ \cite[Section 2.7]{BLM}).
For the purposes of this paper,
a  (right) {\em dual operator module} over $M$ is an
(operator space and right $M$-module completely isometrically,
weak* homeomorphically, and $M$-module isomorphic to a)  weak* closed subspace
$Y \subset B(K,H)$, for Hilbert spaces $H, K$,
with $Y \pi(M) \subset Y$, where $\pi : M \to B(H)$ is
a weak* continuous completely contractive unital homomorphism.
Similarly for left modules.
Dual operator modules may also be characterized abstractly
(see e.g.\ \cite[Section 3.8]{BLM}).   For
right dual operator $M$-modules, the `space of morphisms' for
us will be $w^*CB(Y,Z)_M$, the weak* continuous completely bounded
right $M$-module maps.  We write $\tilde{Y} = w^*CB(Y,M)_M$, this is a left $M$-module, and plays the
role in our theory of the `module dual' of $Y$.

If $n \in \Bbb{N}$ and $M$ is a dual operator algebra,
then we write $C_n(M)$ for the first column of the space $M_n(M)$
of $n \times n$ matrices with entries in $M$.  This
is a right $M$-module, indeed is a dual operator $M$-module.  As one
 expects,
$\tilde{Y}$ is the `row space' $R_n(M)$ in this case.
Similarly if $n$ is replaced by an arbitrary cardinal  $I$:
 $C^w_I(M)$ may be viewed as one `column'
of the `infinite matrix
algebra' $\Bbb{M}_I(M) = M
 \bar{\otimes} B(\ell^2_I)$ (see \cite[2.7.5 (5)]{BLM}).  This is also
a dual operator $M$-module (e.g.\ see 2.7.5 (5) and p.\ 140 in \cite{BLM}).
These modules are the `basic building blocks' of $w^*$-rigged modules.

\section{$W^*$-rigged modules}

Although $C^*$-modules were generalized to the setting of nonselfadjoint operator algebras
in the 1990s, for at least a decade after that it was not clear how generalize
$W^*$-modules and their theory.  In \cite{BK2} we found the correct generalization, namely the
$w^*$-rigged modules.   There are now several equivalent definitions
of these objects (see \cite[Section 4]{BK2}),
of which the following is the most elementary:

\begin{definition} \label{wrig} Suppose that $Y$ is a dual operator
space and a right module over a dual operator algebra $M$. Suppose
that there exists a net of positive integers $(n(\alpha))$, and
$w^*$-continuous completely contractive $M$-module maps
$\phi_{\alpha} : Y \to C_{n(\alpha)}(M)$ and $\psi_{\alpha} :
C_{n(\alpha)}(M) \to Y$, with $\psi_{\alpha}( \phi_{\alpha}(y))  \to
y$ in the $w^*$-topology on $Y$, for all $y \in Y$.   Then we say
that $Y$ is a {\em right $w^*$-rigged module} over $M$.
\end{definition}

Our intention in this section is pedagogical.
We will give some motivation for this definition, and
use it to introduce
some ideas in the theory.  Suppose that $M$ is a von Neumann algebra, acting on
a Hilbert space $H$.  Henceforth in this section, let  $Y$ be a right Banach $M$-module satisfying the
Banach module variant of Definition \ref{wrig}, replacing `operator space' by `Banach space'
and `completely contractive' by `contractive'.   That every right $W^*$-module over $M$
is of this form follows exactly as in the Hilbert space case (where $M = \Bbb{C}$),
since there always exists
an `orthonormal basis' $(e_i)_{i \in I}$ (see \cite{Pas}).  In this case the net is indexed by the
finite subsets of $I$, and it is an easy exercise to write down the
maps $\phi_{\alpha}, \psi_{\alpha}$ in terms of the $e_i$.  We wish to show the
nontrivial converse, that any such Banach $M$-module $Y$ is
a $W^*$-module.  The bulk of this amounts to showing
that the weak$^{*}$ limit $w^*${\rm lim}$_{\alpha} \,
\phi_{\alpha}(y)^* \phi_{\alpha}(z)$ exists in $M$ for $y$, $z$
$\in$ $Y$: this expression then defines the $W^*$-module inner product.
However the existence of this weak$^{*}$ limit seems to be surprisingly difficult.
We will sketch a proof, giving full details of a new
proof of the main part of the argument.

It is a pleasant exercise for the reader (see
the first lemma in \cite{BK2} for a solution),
to check the case
that $M = \Bbb{C}$, that in this case $Y$ is a Hilbert space
with inner product lim$_{\alpha} \,
\langle \phi_{\alpha}(z),
\phi_{\alpha}(y) \rangle$, and this
will be used later in the proof. The next thing to note is that $\Vert y \Vert = \sup_\alpha \,
\Vert \phi_{\alpha}(y) \Vert$.  Indeed, if $\sup_\alpha \,
\Vert \phi_{\alpha}(y) \Vert \leq t < \Vert y \Vert$, then
$\Vert \psi_{\alpha}( \phi_{\alpha}(y)) \Vert \leq t$ for all
$\alpha$, and we obtain the contradiction $\Vert y \Vert \leq t < \Vert y \Vert$
from Alaoglu's theorem.  We remark in passing that a similar argument shows that
for any operator space $Y$ satisfying Definition \ref{wrig}, we have
\begin{equation} \label{yfor}  \Vert [y_{ij}] \Vert_{M_n(Y)}
= \sup_\alpha \; \Vert [\phi_{\alpha}(y_{ij})] \Vert , \qquad
[y_{ij}] \in M_n(Y).
\end{equation}
Note that equation (\ref{yfor}) implies that such a $Y$ is a dual operator module ($Y$ is
identified with a submodule
of a direct sum of the operator modules $C_{n(\alpha)}(M)$).
In our (Banach
module) case, we use (\ref{yfor}) as a {\em definition} of matrix norms.
This makes $Y$ an operator space, a dual operator module, and
it is easy to chck that $Y$ is now a $w^*$-rigged module.

We now mention that
the kind of tensor product that appears in our theory is called the {\em module $\sigma$-Haagerup
tensor product}  $Y \otimes^{\sigma h}_{M} Z$ (see \cite{EP} and  \cite[Section 2]{BK1}).
We will not take time to properly introduce this here, suffice it to say that this
tensor product is a dual operator space which is defined to have
 the universal property that it linearizes completely
bounded separately weak* continuous
bilinear maps satisfying $T(ym,z) = T(y,mz)$ for $y \in Y,z \in Z, m \in M$.
If $Y$ is a $w^*$-rigged $M$-module, and if $H$ is the Hilbert space
that $M$ acts on, then by tensoring on the right with
the identity map on $H$,
one can show that $K = Y \otimes^{\sigma h}_{M} H$ is $w^*$-rigged over $\Bbb{C}$.
By the exercise for the reader above, $K$
is a
Hilbert space with inner product
$$\langle y \otimes \zeta , z \otimes \eta \rangle =
\lim_\alpha \, \langle (\phi_{\alpha} \otimes 1)(y \otimes \zeta) ,
(\phi_{\alpha} \otimes 1)(z \otimes \eta) \rangle = \lim_\alpha \,
\langle \phi_{\alpha} (z)^* \phi_{\alpha}(y) \zeta , \eta \rangle,$$
for $y, z \in Y$ and $\zeta, \eta \in H$.   The computation above
also uses the simple fact that $C_n(M) \otimes^{\sigma h}_{M} H = H^{(n)}$
unitarily via the obvious map.   Thus
$\lim_\alpha \, \phi_{\alpha} (z)^* \phi_{\alpha}(y)$ exists in the
weak* topology of $M$ as desired.  Define $\langle z , y \rangle$ to be this
weak* limit in $M$.  It needs to be checked that
this matches the original norm.  This again uses the  fact that
this holds in the case $M= \Bbb{C}$
(the exercise for the reader above), and the just mentioned
`simple fact', as follows:  If $\zeta \in {\rm Ball}(H)$
then
$$
\Vert \phi_{\alpha}(y) \zeta \Vert^2 = \Vert (\phi_{\alpha} \otimes
1)(y \otimes \zeta) \Vert^2 \leq \Vert y \otimes \zeta \Vert^2 =
\langle \langle y , y \rangle \zeta , \zeta \rangle \leq \Vert y
\Vert^2.$$ Taking a supremum over such $\zeta$ and $\alpha$, and
using (\ref{yfor}) we obtain $$\Vert \langle y , y \rangle \Vert =
\sup_\alpha \, \Vert \phi_\alpha(y) \Vert^2 = \Vert y \Vert^2 ,
\qquad y \in Y .$$   Now it is clear that  $Y$ with its original
norm is a $C^*$-module over $M$.
That $Y$ is a $W^*$-module will now be clear to experts; but in any case
it is an easy fact that $W^*$-modules are the $C^*$-modules
whose inner product is separately weak* continuous (see e.g.\ Lemma 8.5.4 in \cite{BLM}).
The latter is clear in our case
using a basic fact about
$Y \otimes^{\sigma h}_{M} Z$: the map $\otimes$ is
separately weak* continuous (see \cite{EP} and \cite[Section
2]{BK1}).  For example, if $y_t  \to y$ weak* in $Y$ then $y_t \otimes \zeta
\to y \otimes \zeta$ weakly in $K$, hence
$$\langle \langle z , y_t \rangle \zeta , \eta \rangle =
\langle y_t \otimes \zeta , z \otimes \eta \rangle \to \langle y
\otimes \zeta , z \otimes \eta \rangle = \langle \langle z ,
y\rangle \zeta , \eta \rangle . $$

\medskip

{\bf Remark.}   It is tempting to
try to simplify the proof further by using
ultrapowers.  We thank Marius Junge for discussions on this;
it seems that such a proof, while very interesting, may be more complicated.

\section{Bifunctoriality of the tensor product, and an application}

The reader is directed to \cite{BK2} for the basic theory of
$w^*$-rigged modules.  Turning to new results, we first prove the important but nontrivial
fact that the
module $\sigma$-Haagerup
tensor product introduced in the last section is
`bifunctorial' in the following sense:

\begin{theorem} \label{FTM}  Suppose that $Y$ is a right $w^*$-rigged $M$-module,
and that $Z, W$ are left dual operator $M$-modules. If $(S_t)$ is
a net in $w^*CB_M(Z,W)$ with weak* limit $S \in w^*CB_M(Z,W)$,
then $I_Y \otimes S_t \to I_Y
\otimes S$ weak* in $CB(Y \otimes^{\sigma h}_M Z, Y
\otimes^{\sigma h}_M W)$.  Similarly, if $T_t \to T$ weak* in $w^*CB(Y)_M$
then $T_t \otimes I_Z \to T \otimes I_Z$ weak*.
\end{theorem}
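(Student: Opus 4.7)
The plan is to prove both halves by the same Banach--Alaoglu plus separation argument, built on three background ingredients from \cite{BK1,BK2,EP}: (i) the space $w^*CB(Y \otimes^{\sigma h}_M Z, Y \otimes^{\sigma h}_M W)$ is a dual operator space, so bounded subsets are relatively weak*-compact and weak*-limit points again lie in $w^*CB$; (ii) for any single $S \in w^*CB_M(Z,W)$, the operator $I_Y \otimes S$ is a well-defined element of the space in (i), with $\Vert I_Y \otimes S \Vert_{cb} \leq \Vert S \Vert_{cb}$ (the ``functorial in one variable'' direction from \cite{BK2}); and (iii) the canonical map $\otimes$ is separately weak*-continuous on the $\sigma$-Haagerup tensor product (noted in Section 2 and in \cite{EP}), so in particular the algebraic tensor product has weak*-dense linear span in $Y \otimes^{\sigma h}_M Z$.

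For the first assertion, weak* convergence $S_t \to S$ in the dual Banach space $w^*CB_M(Z,W)$ forces $\sup_t \Vert S_t \Vert_{cb} < \infty$, so by (ii) the net $(I_Y \otimes S_t)$ is norm-bounded in the dual space appearing in (i). By Banach--Alaoglu it suffices to identify every weak*-cluster point as $I_Y \otimes S$. Suppose $R$ is the weak* limit of some subnet. Weak*-convergence in $w^*CB$ entails pointwise weak*-convergence of the values (pairing against predual functionals of the form $T \mapsto \psi(Tu)$ for $u \in Y \otimes^{\sigma h}_M Z$, $\psi \in (Y \otimes^{\sigma h}_M W)_*$), so for any elementary tensor $y \otimes z$,
\[
R(y \otimes z) \;=\; w^*\text{-}\lim_t \, (I_Y \otimes S_t)(y \otimes z) \;=\; w^*\text{-}\lim_t \, y \otimes S_t(z) \;=\; y \otimes S(z) \;=\; (I_Y \otimes S)(y \otimes z),
\]
where the middle equality uses that $S_t(z) \to S(z)$ weak* in $W$ (another consequence of weak*-convergence in $w^*CB_M(Z,W)$) together with the separate weak*-continuity of $\otimes$ from (iii). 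Hence $R$ and $I_Y \otimes S$ are two weak*-continuous maps that agree on the weak*-dense span of elementary tensors, so they coincide.

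The second assertion is proved by the same template, interchanging the two variables and using separate weak*-continuity of $\otimes$ in the first slot instead of the second. The main obstacle, I expect, is not the compactness/separation step above but rather the setting-up: verifying that ingredient (i) really furnishes a dual operator space structure on $w^*CB(Y \otimes^{\sigma h}_M Z, Y \otimes^{\sigma h}_M W)$ in which weak*-cluster points of $w^*$-continuous maps remain $w^*$-continuous, and locating the single-map functoriality (ii) in \cite{BK2}. Once (i)--(iii) are in hand, the bifunctoriality claim reduces, as above, to agreement on elementary tensors.
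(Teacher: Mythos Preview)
Your compactness-and-separation template has a real gap precisely where you flag it, but the obstacle is worse than a matter of ``setting up'' ingredient (i).  The subspace $w^*CB(V,W)$ is in general \emph{not} weak*-closed in $CB(V,W)$: for instance, any Banach limit on $\ell^\infty$ lies in the unit ball of $(\ell^\infty)^* = CB(\ell^\infty,\Cdb)$ and hence, by Goldstine, is a weak* limit of a bounded net from $\ell^1 = w^*CB(\ell^\infty,\Cdb)$, yet it is not weak*-continuous.  So your cluster point $R \in CB$ need not be weak*-continuous, and since finite sums of elementary tensors are only weak*-dense (not norm-dense) in $Y\otimes^{\sigma h}_M Z$, agreement of $R$ with $I_Y\otimes S$ on elementary tensors does not force $R = I_Y\otimes S$ everywhere.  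There is also a smaller slip: weak*-convergent \emph{nets} in a dual space need not be norm-bounded (uniform boundedness gives this only for sequences), so the Banach--Alaoglu step does not launch as written either.

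The paper's argument sidesteps both problems by a different device that uses the $w^*$-rigged hypothesis on $Y$ more substantially than your (ii) and (iii).  The key input is the weak* homeomorphism $\theta_Z : Y\otimes^{\sigma h}_M Z \to w^*CB_M(\tilde{Y},Z)$ from \cite[Theorem~3.5]{BK2}, under which $I_Y\otimes S$ corresponds to post-composition by $S$: one verifies $\theta_W\bigl((I_Y\otimes S)(u)\bigr) = S\circ\theta_Z(u)$ on elementary tensors and then extends to all $u$ because \emph{both} sides of this identity are weak*-continuous in $u$ (here the extension is legitimate).  After that, no compactness is needed: for each fixed $u$ the map $\theta_Z(u)\in w^*CB_M(\tilde{Y},Z)$ is fixed, and $S_t\circ\theta_Z(u)\to S\circ\theta_Z(u)$ point-weak* on $\tilde{Y}$ simply because $S_t\to S$ point-weak* on $Z$ by hypothesis.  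Transporting back through $\theta_W$ gives $(I_Y\otimes S_t)(u)\to(I_Y\otimes S)(u)$ weak* for \emph{every} $u$ directly.  For the second assertion the paper does not repeat this template but appeals to operator space multiplier theory, as in the proof of \cite[Theorem~2.4]{BK2}.
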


\begin{proof}  The key new idea is to employ an isomorphism
found in  Theorem 3.5 of
\cite{BK2}: if  $\tilde{Y} = w^*CB(Y,M)_M$, then
the following map is an isometric
weak* homeomorphism: $\theta_Z : Y \otimes^{\sigma h}_M Z
\to w^*CB_M(\tilde{Y},Z)$,
 taking $y \otimes z$, for $y \in Y, z \in Z$,
 to the operator mapping
an $x \in \tilde{Y}$ to $(x,y) z \in Z$.
If $u = y \otimes z$, for $y, z$ as above, and $x \in \tilde{Y}$, then
$$(S \circ \theta_{Z})(u)(x) = S((x,y) z) = (x,y) Sz = \theta_{W} ((I
\otimes S)(u)) .$$   Thus $S \circ \theta_{Z}(u) = \theta_{W} ((I
\otimes S)(u))$ for $u = y \otimes z$.
By $w^*$-continuity and the density of
elementary tensors, it follows that $S \circ \theta_{Z}(u) = \theta_{W} ((I
\otimes S)(u))$, for all $u \in Y \otimes^{\sigma h}_M Z$ and $S
\in w^*CB_M(Z,W)$. If $S_t \to S$ weak* in $w^*CB_M(Z,W)$,
then $S_t \circ \theta_{Z}(u) \to S  \circ \theta_{Z}(u)$ weak*,
and hence $\theta_{W}((I \otimes S_t)(u))  \to \theta_{W}((I
\otimes S)(u))$ weak*.   Thus $(I \otimes S_t)(u) \to  (I \otimes S)(u)$ weak*.

 That $T_t \otimes I_Z \to T \otimes I_Z$ weak* is much shorter, following from
facts about operator space multipliers, as in the proof of Theorem 2.4 in
\cite{BK2}.
  \end{proof}

{\bf Remark.}  The variant of the last statement of the last theorem
for a net $(T_t)$ in $w^*CB(Y,Y')$, where $Y'$ is a second
$w^*$-rigged $M$-module, is also valid.  This may be seen via the
trick of viewing  $(T_t)$ in $w^*CB_M(Y \oplus^c Y', Y \oplus^c Y')$.

\medskip

In \cite{UK1}, Kashyap proved
the `difficult direction' of the analogue of one of Morita's famous
theorems:
 dual operator algebras are weak* Morita equivalent iff they are
{\em left dual Morita equivalent}
 in the sense of \cite[Definition 4.1]{UK1}.    Some of the `easy direction' of the
theorem was observed in \cite{BK1}.  However one aspect of this,
namely the weak* continuity of the
functors implementing the categorial equivalence, stumped us until we were able in
the present work to prove Theorem \ref{FTM}.

\begin{corollary} \label{ed}  If $M$ and $N$ are
weak* Morita equivalent in the sense of \cite{BK1}, then their
categories of dual operator modules are equivalent via functors that
are weak* continuous on morphism spaces.
That is, they are
left dual Morita equivalent  in the sense of {\rm \cite[Definition
4.1]{UK1}}.
\end{corollary}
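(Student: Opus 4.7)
The plan is to extract the corollary directly from Theorem \ref{FTM}. Recall from \cite{BK1} that a weak* Morita equivalence between $M$ and $N$ is implemented by a pair of $w^*$-rigged bimodules $Y$ and $X$, together with bimodule isomorphisms $Y \otimes^{\sigma h}_N X \cong M$ and $X \otimes^{\sigma h}_M Y \cong N$. The induced functors $F = - \otimes^{\sigma h}_M Y$ and $G = - \otimes^{\sigma h}_N X$ between the appropriate categories of dual operator modules were shown in \cite{BK1} to be mutually inverse additive equivalences; what is missing to match Kashyap's notion of \emph{left dual Morita equivalence} in \cite[Definition 4.1]{UK1} is precisely that these functors are weak* continuous on morphism spaces.

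For the left-module categories this is immediate from Theorem \ref{FTM}: if $S_t \to S$ weak* in $w^*CB_M(Z,W)$, then $I_Y \otimes S_t \to I_Y \otimes S$ weak*, which is exactly the weak* continuity of $Z \mapsto Y \otimes^{\sigma h}_M Z$ on morphism spaces. For the right-module categories one needs the mirror image of Theorem \ref{FTM}: for a net $T_t \to T$ weak* in $w^*CB(Z,W)_N$ between right dual operator $N$-modules, one must have $T_t \otimes I_X \to T \otimes I_X$ weak*. Since \cite[Theorem 3.5]{BK2} admits a symmetric left-module version, the same proof as for Theorem \ref{FTM} (with `left' and `right' interchanged throughout and the corresponding left $w^*$-rigged analogue of the representation $\theta$) delivers this mirror statement. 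The remark following Theorem \ref{FTM} further handles the case where the morphisms live between distinct $w^*$-rigged modules, covering nets in $w^*CB(Y,Y')_M$ or its symmetric analogue.

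The remaining axioms of Kashyap's definition---the natural bimodule isomorphisms relating $Y \otimes^{\sigma h}_N X$ to $M$ and $X \otimes^{\sigma h}_M Y$ to $N$, the associativity of the $\sigma$-Haagerup tensor product, and the naturality of the unit and counit of the equivalence---are already recorded in \cite{BK1}. Thus the only substantive obstacle, and the sole reason this direction of Morita's fundamental theorem was left incomplete in \cite{BK1}, is the weak* continuity just verified; Theorem \ref{FTM} is exactly the new input that overcomes it, and with it in hand Corollary \ref{ed} follows.
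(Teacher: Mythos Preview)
Your approach is the same as the paper's: invoke Theorem \ref{FTM} to obtain weak* continuity of the tensoring functors on morphism spaces, this being the only ingredient missing from \cite{BK1}. The paper's argument is shorter, treating just the left-module functors $Y \otimes^{\sigma h}_M -$ and $X \otimes^{\sigma h}_N -$, so your separate right-module discussion is not needed for the stated corollary; also watch your bimodule conventions---you first set $F = - \otimes^{\sigma h}_M Y$ with $Y$ an $M$-$N$-bimodule, but then argue with $Z \mapsto Y \otimes^{\sigma h}_M Z$, which under those conventions is ill-formed (it is $X$, not $Y$, that is a right $M$-module in your labeling).
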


\begin{proof}
Let $Y$ be the equivalence $N$-$M$-bimodule, with dual bimodule $X$.  Then
$Y$ is a $w^*$-rigged
$M$-module, and so the functor $Y \otimes^{\sigma h}_M \; \-- \;$ is weak*
continuous on spaces of morphisms, by Theorem \ref{FTM}.  Similarly,
$G = X \otimes^{\sigma h}_N  \;  \-- \;$ is weak* continuous.
That is, the functors implementing the categorial equivalence
are weak* continuous, which was the missing detail from our paper
\cite{BK1}.
\end{proof}

\section{A variant of Paschke's `free module' characterization}

In the last section of \cite{BK2}, we gave several examples of
$w^*$-rigged modules, including the following two.   First, if $P$ is a
weak*-continuous completely contractive idempotent $M$-module map on
$C^w_I(M)$, for a cardinal/set $I$, then Ran$(P)$ is a $w^*$-rigged
module.  Second, suppose that
 $Z$ is any
WTRO (that is, a weak* closed subspace of $B(K,H)$ satisfying
$Z Z^* Z \subset Z$), and suppose that $Z^* Z$ is contained in a dual operator
algebra $M$. Then $Y = \overline{ZM}^{w^*}$ is a $w^*$-rigged
$M$-module, and such modules $Y$
may be viewed as a one-sided generalization of the
bimodules studied in \cite{El,EP}. The following is the analogue of
a famous theorem due to Paschke (see \cite{Pas} or \cite[Corollary
8.5.25]{BLM}).  We identify two $w^*$-rigged modules {\em as dual
operator $M$-modules,}
if there is a surjective weak* homeomorphic completely isometric $M$-module
map between them.

\begin{theorem}  As dual operator $M$-modules, the above two classes  of $w^*$-rigged modules over $M$
coincide, and coincide with the class of $w^*$-rigged module direct
sums $\oplus^{wc}_{i  \in I} \; p_i M$, for sets of
projections $\{ p_i : i \in I \}$  in $M$ (this sum
coincides with the weak* closure in $C^w_I(M)$ of the
algebraic direct sum  $\oplus_{i  \in I} \; p_i M$).  \end{theorem}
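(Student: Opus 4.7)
I will prove the three-way coincidence via the cycle $(\mathrm{C}) \Rightarrow (\mathrm{A}) \Rightarrow (\mathrm{B}) \Rightarrow (\mathrm{C})$, where each implication is understood up to the isomorphism relation specified just before the theorem (surjective weak* homeomorphic complete $M$-module isometry). The last clause of the theorem, about the weak* closure of the algebraic direct sum, will drop out of the first step.

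$(\mathrm{C}) \Rightarrow (\mathrm{A})$: Given projections $\{p_i\}_{i \in I} \subset M$, the ``diagonal'' map $P : C^w_I(M) \to C^w_I(M)$, $(m_i) \mapsto (p_i m_i)$, is a weak*-continuous completely contractive $M$-module idempotent. Its range is the weak* closed submodule $\{(p_i m_i) : (m_i) \in C^w_I(M)\}$, which one readily checks contains and coincides with the weak* closure in $C^w_I(M)$ of the algebraic direct sum $\oplus_i p_i M$; by the universal property of the $w^*$-rigged direct sum this range is identified with $\oplus^{wc}_i \, p_i M$. This simultaneously gives the final clause of the theorem.

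$(\mathrm{A}) \Rightarrow (\mathrm{B})$: Given $Y = \mathrm{Ran}(P) \subset C^w_I(M) \subset B(H, H^I)$, I claim one can take $Z := Y$ itself. Indeed, for columns $(m_i),(n_i) \in C^w_I(M)$ the inner product $\sum_i m_i^* n_i$ converges weak* in $M$, so $Y^* Y \subset C^w_I(M)^* C^w_I(M) \subset M$. Since $P$ is $M$-linear, $YM \subset Y$, and therefore $Y Y^* Y \subset YM \subset Y$, which makes $Y$ a WTRO, with $Z^*Z \subset M$; trivially $Y = \overline{YM}^{w^*}$ since $1 \in M$.

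$(\mathrm{B}) \Rightarrow (\mathrm{C})$: This is the main step, and the main obstacle. The strategy is a Paschke-style orthogonalization inside the WTRO $Z$. Write $N := \overline{\mathrm{span}\, Z^*Z}^{w^*}$, which is a von Neumann subalgebra of $M$. A polar-decomposition plus Zorn's lemma argument in the WTRO setting yields a family $\{v_i\}_{i \in I}$ of partial isometries in $Z$ with $v_i^* v_j = \delta_{ij} p_i$ for projections $p_i \in N \subset M$, such that $Z$ is the weak* closed linear span of $\bigcup_i v_i N$. Then $Y = \overline{ZM}^{w^*}$ is the weak* closed span of $\bigcup_i v_i M$; for each $i$ the map $v_i m \mapsto v_i^* v_i m = p_i m$ is a completely isometric weak* homeomorphic $M$-module isomorphism $v_i M \to p_i M$ with inverse $p_i m \mapsto v_i m$, and the orthogonality $v_i^* v_j = 0$ for $i \neq j$ lets these assemble into an isomorphism $Y \cong \oplus^{wc}_i p_i M$, closing the loop. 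The hardest point is justifying the correct WTRO decomposition in the weak*-closed non-self-adjoint context; as an alternative route one can use that $Y$ in $(\mathrm{B})$ is already known (from the earlier section) to be $w^*$-rigged, and perform the orthogonalization intrinsically on $Y$ via its rigged structure rather than on the outer WTRO $Z$.
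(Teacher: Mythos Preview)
Your step $(\mathrm{A}) \Rightarrow (\mathrm{B})$ contains a genuine error. You assert that for columns $(m_i),(n_i)\in C^w_I(M)$ the sum $\sum_i m_i^* n_i$ lies in $M$, and hence that $Y^*Y\subset M$. But $M$ is a general (nonselfadjoint) dual operator algebra, so $m_i^*$ need not belong to $M$, and $C^w_I(M)^*C^w_I(M)$ sits only in the ambient $B(H)$, not in $M$. In fact your claim already fails for $I$ a singleton and $P=I$: take $M$ to be the upper triangular $2\times 2$ matrices; then $Y=M$ and $M M^* M\not\subset M$, so $Y$ is not a WTRO at all. Thus one cannot simply take $Z=Y$.

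The paper's route around this is the observation you are missing: a weak*-continuous completely contractive idempotent $P$ on $C^w_I(M)$ corresponds, under $w^*CB(C^w_I(M))_M\cong \Mdb_I(M)$, to a matrix which, viewed in $B(H^{(I)})$, is a contractive idempotent and hence an orthogonal projection. Consequently its entries satisfy $a_{ij}=a_{ji}^*$ and so lie in the diagonal $N=M\cap M^*$. One then sets $Z:=P\,C^w_I(N)$, which \emph{is} a WTRO with $Z^*Z\subset N\subset M$, and checks $P\,C^w_I(M)=\overline{ZM}^{w^*}$ by approximating with the standard basis columns $e_i$. Your other two implications are essentially along the paper's lines (the Paschke orthogonalization you invoke for $(\mathrm{B})\Rightarrow(\mathrm{C})$ is exactly what the paper uses, producing partial isometries $z_i\in Z$ with $z_i^*z_j=\delta_{ij}p_i$ and then identifying $Y$ with $\oplus^{wc}_i p_iM$ inside $C^w_I(M)$), so once you repair $(\mathrm{A})\Rightarrow(\mathrm{B})$ via the selfadjointness of $P$ the cycle closes.
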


\begin{proof}  Suppose that we have a projection $P \in w^*CB(C^w_I(M))_M \cong
\Bbb{M}_I(M)$ (see Corollary 3.6 in \cite{BK2}).   If $P = [a_{ij}]$ then
$[a_{ij}] = [a_{ji}^*]$, and so $P \in \Bbb{M}_I(N)$ where $N = M \cap
M^*$. Let $Z = P C^w_I(N)$, a WTRO.  Then $Z M \subset P C^w_I(M)$.
On the other hand,  if $\{ e_i \}$ is the usual `basis' for
$C^w_I(N)$ then $P e_i \in Z$, and so $P e_i m  \in ZM$ for all $m
\in M$. Hence $P C^w_I(M) \subset \overline{ZM}^{w^*}$, and so $P
C^w_I(M) = \overline{ZM}^{w^*}$.

Conversely,  suppose that $Y = \overline{ZM}^{w^*}$ as above.  Set
$R = \overline{Z^* Z}^{w^*}$. By the theorem of Paschke that we
are modifying \cite{Pas},
there exist mutually orthogonal partial isometries $(z_i)_{i \in I}
\subset Z$ with $\sum_i \, z_i z_i^* z = z$ for all $z \in Z$, and
if $P = [z_i^* z_j]$ then  $Z \cong P C^w_I(R)$.
The map $\theta : Y \to C^w_I(M) : x \mapsto [z_i^* x]$ is a weak*
continuous complete isometry with left inverse $[m_i] \mapsto \sum_i
\, z_i m_i$. It is easy to see that $\theta(Y) \subset P C^w_I(M)$.
Also, $\theta(z_j) = P e_i$ for $e_i$ as above, and so as at the end
of the last paragraph we have $P C^w_I(M) \subset \theta(Y)$.  Hence
$Y \cong P C^w_I(M)$ via $\theta$.

Finally, suppose that $p_i = z_i^* z_i$.  We
may view $\oplus^{wc}_i \, p_i M$ as a submodule of $C^w_I(M)$.  Clearly,
$\oplus^{wc}_i \, p_i M \subset P C^w_I(M)$, and the reverse
inclusion follows as in the first paragraph,  since $P e_i \in
\oplus^{wc}_i \, p_i M$.
\end{proof}

The modules considered in the last theorem
 form a very interesting subclass of the
$w^*$-rigged modules, and we propose a study of this subclass.
We shall call them {\em projectively $w^*$-rigged modules}.

\begin{proposition} \label{new}  Let $Y$ be a
 $w^*$-rigged right module over $M$, and let $X = \tilde{Y}$.
Then $Y$  is projectively $w^*$-rigged
iff there exists a pair $x = (x_i) \in {\rm Ball}(C^w_I(X)), y = (y_i) \in {\rm Ball}(R^w_I(Y))$, for a
cardinal $I$, such that $x_i(y_j) = \delta_{ij} \, p_j$ for an orthogonal projection $p_j \in M$, for
all $i,j \in I$, and $\sum_{i \in I} \, y_i \otimes x_i = 1$ weak* in $w^*CB(Y)_M$.
\end{proposition}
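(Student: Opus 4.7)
The plan is to exploit the characterization of projectively $w^*$-rigged modules established in the preceding theorem: such $Y$ are precisely the ranges of weak*-continuous completely contractive right $M$-module idempotents on $C^w_I(M)$, or equivalently modules of the form $\overline{ZM}^{w^*}$ for a WTRO $Z$ with $Z^*Z \subset M$. The data of such an idempotent factors as a completely isometric $M$-module embedding $Y \hookrightarrow C^w_I(M)$ composed with a completely contractive module retraction, and I will encode these two maps respectively as the column $(x_i) \in C^w_I(X)$ and the row $(y_i) \in R^w_I(Y)$ of the proposition; the identity $\sum_i y_i \otimes x_i = 1$ will then just say that retraction after embedding is $I_Y$.

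For the $(\Leftarrow)$ direction, I would set $\theta : Y \to C^w_I(M)$, $y \mapsto (x_i(y))_i$, and $\psi : C^w_I(M) \to Y$, $(m_i) \mapsto \sum_i y_i m_i$. Under the standard isometric identifications $C^w_I(X) \cong w^*CB_M(Y, C^w_I(M))$ and $R^w_I(Y) \cong w^*CB_M(C^w_I(M), Y)$, the hypothesis that $(x_i), (y_i)$ lie in the respective unit balls translates precisely to $\theta, \psi$ being weak*-continuous completely contractive right $M$-module maps, and the relation $\sum_i y_i \otimes x_i = 1_Y$ in $w^*CB(Y)_M$ is exactly $\psi\circ\theta = I_Y$. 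Hence $\theta\psi$ is a weak*-continuous completely contractive $M$-module idempotent on $C^w_I(M)$ with range $\theta(Y) \cong Y$, and the preceding theorem finishes the job. The pointwise condition $x_i(y_j) = \delta_{ij} p_j$ is not used in this direction; it is baggage that appears for free in the converse.

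For the $(\Rightarrow)$ direction, I would write $Y = \overline{ZM}^{w^*}$ for a WTRO $Z$ and apply Paschke's $W^*$-module basis theorem to $Z$ viewed as a $W^*$-module over $R = \overline{Z^*Z}^{w^*}$, obtaining partial isometries $(z_i)_{i \in I} \subset Z$ with $z_i^* z_j = \delta_{ij} p_j$ for orthogonal projections $p_j \in R \subset M$ and with $\sum_i z_i z_i^* z = z$ weak* for every $z \in Z$. I would then set $y_i = z_i \in Y$ and define $x_i(y) := z_i^* y$; the weak* density of $ZM$ in $Y$ together with $Z^*Z \subset M$ and the weak* closedness of $M$ shows $x_i : Y \to M$ is well-defined, and it is manifestly a weak*-continuous completely contractive right $M$-module map, hence an element of $X$. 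The equality $x_i(y_j) = \delta_{ij} p_j$ is then immediate, the row membership $(y_i) \in \mathrm{Ball}(R^w_I(Y))$ follows from the orthogonality of the final projections $z_i z_i^*$, and the column membership of $(x_i)$ follows because $(x_i)$ corresponds under the identification above to the complete isometry $y \mapsto (z_i^* y)_i$ of $Y$ into $C^w_I(M)$ already constructed in the proof of the preceding theorem. Finally, the identity $\sum_i y_i \otimes x_i = 1_Y$ reduces on tensors $z m$ with $z \in Z$ to the Paschke reconstruction $\sum_i z_i z_i^* z m = z m$, and extends to all of $Y$ by weak* density and the separate weak* continuity of $\otimes$ (equivalently by Theorem \ref{FTM}).

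The principal technical point I expect to wrestle with is in the $(\Rightarrow)$ direction: arranging the Paschke basis so that the inner product matrix $[z_i^* z_j]$ is \emph{genuinely} diagonal (rather than merely the positive projection recorded in the preceding theorem's proof), and invoking cleanly the isometric identifications of $C^w_I(X)$ and $R^w_I(Y)$ with the appropriate spaces of module maps. The $(\Leftarrow)$ direction and the remaining verifications are essentially definitional once these identifications are in hand.
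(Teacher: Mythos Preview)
Your proposal is correct and matches the paper's (very brief) proof, which for $(\Rightarrow)$ simply takes $y_i = z_i$ and $x_i = z_i^*$ from the preceding theorem and for $(\Leftarrow)$ cites a ``slight variant'' of that theorem's second paragraph to identify $Y$ with $P\,C^w_I(M)$ for $P = [\delta_{ij}\,p_j]$. Your flagged ``principal technical point'' is a non-issue: the partial isometries $(z_i)$ in the preceding theorem are already chosen \emph{mutually orthogonal} via Paschke's basis, so $z_i^* z_j = 0$ for $i \neq j$ and the matrix $P = [z_i^* z_j]$ there is genuinely diagonal from the start. One small difference: in $(\Leftarrow)$ you observe that the diagonal condition $x_i(y_j) = \delta_{ij}\,p_j$ is not needed, since $\theta\psi$ is automatically a completely contractive weak*-continuous $M$-module idempotent on $C^w_I(M)$ with range $\cong Y$; the paper instead uses the diagonal condition to name the idempotent explicitly. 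Your route is slightly cleaner there, the paper's is more concrete---but they amount to the same thing.
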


\begin{proof}  If $Y$ is projectively $w^*$-rigged, then we set $y_i = z_i,  x_i  = z_i^*$ in the
notation above.  Conversely, if such $x, y$ exist, then a slight variant
of the second paragraph of the proof of the last theorem
shows that $Y$ may be identified with $P C^w_I(M)$, where $P = [\delta_{ij} \, p_j]$,
   as dual operator $M$-modules.
\end{proof}

The following is a variant of the stable isomorphism theorem from \cite{EP}.  The
notation $\Bbb{M}_I(M)$ is defined in the introduction.

\begin{theorem}  Suppose that $Y$ is a weak* Morita equivalence $M$-$N$-bimodule,
 over dual operator algebras
$M$ and $N$, and suppose that $Y$ is both left and right projectively
$w^*$-rigged.   Then $M$ and $N$ are stably isomorphic (that is,
$\Bbb{M}_I(M) \cong \Bbb{M}_I(N)$ completely isometrically and weak* homeomorphically,
for some cardinal $I$).
  \end{theorem}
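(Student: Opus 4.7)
The plan is to use the classification theorem for projectively $w^*$-rigged modules to realize $Y$ as an explicit matrix corner on both sides, translate the Morita structure into corner identifications in infinite matrix algebras, and finish by a stable-absorption argument. First, I would apply the theorem above to $Y$ considered as a right $M$-module: $Y$ is completely isometrically and weak*-homeomorphically $M$-isomorphic to $P \, C^w_I(M)$, where $P = [\delta_{ij} p_j] \in \mathbb{M}_I(M)$ is the diagonal projection assembled from orthogonal $p_j \in M$. By the symmetric left-handed statement, $Y \cong R^w_J(N) \, Q$ as a left $N$-module for a diagonal projection $Q \in \mathbb{M}_J(N)$.

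Second, I would use the Morita structure to identify endomorphism algebras. Since $Y$ is a weak*-Morita equivalence bimodule, the left $N$-action on $Y$ gives a weak*-homeomorphic complete isometry $N \to w^*CB(Y)_M$; combining this with Corollary 3.6 of \cite{BK2}, which yields $w^*CB(P \, C^w_I(M))_M \cong P \, \mathbb{M}_I(M) \, P$, we obtain $N \cong P \, \mathbb{M}_I(M) \, P$, and symmetrically $M \cong Q \, \mathbb{M}_J(N) \, Q$. Next I would check that $P$ is a full projection in $\mathbb{M}_I(M)$, in the sense that the weak*-closed two-sided ideal it generates is all of $\mathbb{M}_I(M)$: this follows from surjectivity of the Morita pairing $\tilde{Y} \otimes^{\sigma h}_N Y \to M$, translated through the identifications $\tilde{Y} \cong R^w_I(M) \, P$ and $Y \cong P \, C^w_I(M)$, under which the pairing becomes ordinary matrix multiplication $R^w_I(M) \, P \, C^w_I(M) \to M$. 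Likewise $Q$ is full in $\mathbb{M}_J(N)$.

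Finally, I would invoke a stable-absorption principle: given these full-corner identifications, one chooses an infinite cardinal $K$ dominating both $I$ and $J$ (say $K = I + J + \aleph_0$) and assembles mutually inverse weak*-continuous complete isometries $\mathbb{M}_K(M) \to \mathbb{M}_K(N)$. The two Proposition~\ref{new} systems coming from the two projective $w^*$-rigged structures furnish complementary families of partial-isometry data in the linking algebra of $Y$, which can be diagonally stacked to produce the stable intertwiner; alternatively, one can adapt the main argument of \cite{EP} to the present dual setting. The main obstacle is precisely this last step, which is the dual operator algebra analogue of the Brown-Green-Rieffel theorem: all intertwining maps must be weak*-continuous complete isometries throughout, and this is exactly why projective $w^*$-riggedness is imposed on \emph{both} sides, each side supplying half of the partial-isometry data required to splice the two corner identifications into a single stable isomorphism.
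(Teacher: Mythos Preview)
Your outline has the left/right roles of $M$ and $N$ reversed throughout: since $Y$ is an $M$-$N$-bimodule, right projective $w^*$-riggedness is over $N$ (giving $Y\cong P\,C^w_I(N)$), and left is over $M$ (giving $Y\cong R^w_J(M)\,Q$). With that swap your corner identifications become $M\cong P\,\mathbb{M}_I(N)\,P$ and $N\cong Q\,\mathbb{M}_J(M)\,Q$; the fullness check you describe then goes through after the obvious relabelling.

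More substantively, your route diverges from the paper's and leaves the hard step unproved. You reduce to ``$M$ is a full diagonal corner of $\mathbb{M}_I(N)$ and vice versa'' and then appeal to an unspecified stable-absorption principle or to adapting \cite{EP}. That final step \emph{is} the content of the theorem; gesturing at it is not a proof, and you yourself flag it as the main obstacle. The paper avoids this entirely by working at the level of column modules rather than algebra corners. From right projective $w^*$-riggedness one has $Y\oplus^c Z\cong C^w_I(N)$ with $Z=(I-P)C^w_I(N)$. From left projective $w^*$-riggedness one uses the data of Proposition~\ref{new} to build explicit maps $\mu:N\to C^w_J(Y)$ and $\rho:C^w_J(Y)\to N$ exhibiting $N$ as a complemented summand of $C^w_J(Y)$, so $N\oplus^c L\cong C^w_J(Y)$. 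Feeding these two splittings into the Eilenberg swindle (as in the proof of \cite[Theorem~8.5.28]{BLM}) gives $C^w_s(Y)\cong C^w_s(N)$ and hence $\mathbb{M}_s(Y)\cong\mathbb{M}_s(N)$ as dual operator $N$-modules, for a suitable cardinal $s$; by symmetry $\mathbb{M}_s(Y)\cong\mathbb{M}_s(M)$, and stable isomorphism follows.

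The advantage of the paper's approach is that the Eilenberg swindle is elementary and needs no dual-operator-algebra analogue of Brown--Green--Rieffel; everything reduces to manipulating direct sums of right $N$-modules. Your full-corner approach is conceptually natural, but to complete it you would essentially have to reprove the relevant part of \cite{EP} in this setting, which is more work than the direct module-theoretic argument.
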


\begin{proof}  We will be brief, since this is well-trodden ground
(see Theorem 8.5.28 and Theorem 8.5.31 in \cite{BLM}).
 Since $Y$ is projectively $w^*$-rigged over $N$, we have $Y \cong P C^w_I(N)$ for a cardinal $I$.
If $Z = (I-P) C^w_I(N)$ then $Y \oplus^c Z \cong C^w_I(N)$ as dual operator $N$-modules.
Since $Y$ is projectively $w^*$-rigged on the left over $M$, by the other-handed version of
Proposition \ref{new} there exists a pair $x = (x_i) \in {\rm Ball}(R^w_J(X)), y = (y_i) \in
 {\rm Ball}(C^w_J(Y))$, for a
cardinal $J$, such that $y_j \otimes x_i = [y_j,x_i] = \delta_{ij} p_j$ for an orthogonal projection $p_j \in M$, for
all $i,j \in J$, and $\sum_{j \in J} \, x_j(y_j) = 1$ weak* in $M$.
Define maps $\mu : N \to C^w_J(Y)$ and $\rho : C^w_J(Y) \to N$
by $\mu(n) =  [y_j n]$, and $\rho([z_j]) =  \sum_{j \in J} \, x_j(z_j)$, respectively.  We obtain $N \cong Q
C^w_J(Y)$, where $Q = \mu \circ \rho$.  We may identify $Q$ with
a diagonal matrix with projections $q_j$ as the diagonal entries.
If $L = (I-Q) C^w_J(Y)$,
then $N \oplus^c L \cong C^w_J(Y)$ as dual operator $N$-modules.  The Eilenberg swindle, as used in the proof of
\cite[Theorem 8.5.28]{BLM}, then yields $C^w_s(Y) \cong C^w_s(N)$, and $\Bbb{M}_s(Y)
\cong \Bbb{M}_s(N)$,
as dual operator $N$-modules, for some cardinal $s$.  By symmetry,
$\Bbb{M}_t(Y) \cong \Bbb{M}_t(M)$,  for some cardinal $t$ which we
can take to be equal to $s$.  Thus $M$ and $N$ are stably isomorphic as in \cite{BLM}.
 \end{proof}

\begin{proposition} \label{ne}    Not every $w^*$-rigged module is projectively $w^*$-rigged.
\end{proposition}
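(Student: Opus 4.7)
I propose to prove Proposition \ref{ne} indirectly, via the stable isomorphism theorem established immediately before it. The key observation is contrapositive: if every $w^*$-rigged module (over every dual operator algebra) were projectively $w^*$-rigged, then any weak* Morita equivalence bimodule $Y$ between dual operator algebras $M$ and $N$ would automatically be both left and right projectively $w^*$-rigged, and the preceding theorem would then force $M$ and $N$ to be stably isomorphic. Thus Proposition \ref{ne} reduces to producing a pair of weak* Morita equivalent (in the sense of \cite{BK1}) dual operator algebras that fail to be stably isomorphic. Any equivalence bimodule between such a pair is $w^*$-rigged on both sides but, by the theorem, cannot be both-sided projectively $w^*$-rigged, giving the required example.

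Such a pair cannot arise among von Neumann algebras, where Rieffel-type results force weak Morita equivalence to coincide with stable isomorphism; one must therefore work with genuine non-selfadjoint dual operator algebras. A natural source is the Morita theory of \cite{BK1}: Morita equivalences arising from function algebras, reflexive operator algebras, or corner constructions $pMp$ (for varying full projections $p$) can be shown, in suitable cases, not to collapse to stable isomorphism. The indirect route is attractive because the verification that the bimodule is $w^*$-rigged is automatic from the definition of weak* Morita equivalence; one only needs to check on one side that the $w^*$-rigged structure is not projective.

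A more direct alternative would fix $M = H^\infty(\Bbb{D})$. Since $M \cap M^* = \Bbb{C}$ (a bounded analytic function with bounded analytic conjugate is constant), the only projections in $M$ are $0$ and $1$, so the structure theorem forces every projectively $w^*$-rigged $M$-module to be isomorphic, as a dual operator $M$-module, to some free column $C^w_I(M)$. It then suffices to exhibit a $w^*$-rigged $M$-module not isomorphic to any such $C^w_I(M)$, distinguishable via an invariant such as the endomorphism algebra $w^*CB(Y)_M$ (which is $\Bbb{M}_I(M)$ for $Y \cong C^w_I(M)$). The main obstacle, in either route, is producing the example concretely: in the indirect route, locating weakly Morita equivalent non-stably-isomorphic non-selfadjoint algebras; in the direct route, crafting a $w^*$-rigged module over $H^\infty(\Bbb{D})$ that genuinely satisfies Definition \ref{wrig} yet resists any presentation as a weak* direct sum $\oplus^{wc} p_i M$. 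The delicate point in the direct route is exploiting the flexibility of the approximating net $(\phi_\alpha, \psi_\alpha)$ (no coherence is required beyond point-wise weak* convergence) while simultaneously ruling out every completely isometric weak*-homeomorphic $M$-module isomorphism to a free column.
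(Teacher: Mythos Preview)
Your indirect strategy is exactly the one the paper uses: assume every $w^*$-rigged module is projectively $w^*$-rigged, note that any weak* Morita equivalence bimodule $Y$ is then automatically projectively $w^*$-rigged on both sides, invoke the preceding stable isomorphism theorem, and derive a contradiction from a known pair of weak* Morita equivalent dual operator algebras that are \emph{not} stably isomorphic. So the plan is sound and matches the paper's route.

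The gap is that you have not actually produced the example, and you acknowledge this as the ``main obstacle.'' The paper does not construct a new example here; it simply cites one already in the literature. Specifically, it uses the $M$-$N$-bimodule $Y$ appearing as Example~(10) in \cite[Section~3]{BK1}, originally due to Eleftherakis. There it is observed that $Y$ is a weak* Morita equivalence bimodule, while Eleftherakis had shown that $Y$ does not implement a $\Delta$-equivalence in the sense of \cite{El,EP}. Since stable isomorphism implies $\Delta$-equivalence, $M$ and $N$ cannot be stably isomorphic, and the contradiction closes. Once you know where to look, the proof is one paragraph; the content really is ``quote the right example,'' not ``build one from scratch.''

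Your alternative direct approach via $M = H^\infty(\Bbb{D})$ is appealing (the observation $M \cap M^* = \Bbb{C}$ is correct and does force projectively $w^*$-rigged modules over $M$ to be free columns), but as you note, it leaves you with the nontrivial task of exhibiting a non-free $w^*$-rigged $H^\infty(\Bbb{D})$-module and proving it is not completely isometrically $M$-isomorphic to any $C^w_I(M)$. The paper sidesteps all of this by leaning on the Eleftherakis example.
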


\begin{proof}  Suppose by way of contradiction that
every right $w^*$-rigged module $Y$ is a projectively $w^*$-rigged module.
Then every left $w^*$-rigged module is projectively $w^*$-rigged, by passing to the adjoint
(or conjugate) $\bar{Y}$.
Let $Y$ be the $M$-$N$-bimodule in Example (10) in \cite[Section 3]{BK1}.  This is an
example due to Eleftherakis, who showed that $Y$ does not
implement what he calls a $\Delta$-equivalence \cite{El,EP}.  We observed in
\cite{BK1} that $Y$ is a weak* Morita equivalence $M$-$N$-bimodule.
Hence it is both left and right $w^*$-rigged over
$M$ and $N$ respectively.  Thus  $Y$  is both left and right projectively
$w^*$-rigged.  By the last theorem, $M$ and $N$ are stably isomorphic,
hence $\Delta$-equivalent in Eleftherakis' sense.  This
is a contradiction.  \end{proof}

\section{Structure of isometries between bimodules}

A question of perennial interest is the structure of surjective
linear isometries between various algebras.  For $C^*$-algebras
the appropriate theorem is Kadison's noncommutative Banach-Stone
theorem.  For nonselfadjoint algebras, the most general results
on surjective
isometries are due to Arazy and Solel \cite{AS},
using deep techniques.  One gets a much simpler structure,
with a considerably easier proof, if one restricts
attention to surjective {\em complete} isometries
$T : A \to B$ between
unital operator algebras: the theorem here
is that $T$ is  the product of a unitary in $B \cap B^*$
and a surjective completely
isometric homomorphism from $A$ onto $B$ (see e.g.\ \cite[Theorem 4.5.13]{BLM}).  In this
section,
we are interested in generalizing such results to maps between bimodules.
Our spaces will be weak* Morita equivalence bimodules, that is bimodules that
are both left and right $w^*$-rigged modules, satisfying a natural compatibility
condition between the left and right actions \cite[Section 5 (3)]{BK2}
 (by considering 2 dimensional examples
it seems that
there is no characterization theorem in the case of general {\em one-sided}
$w^*$-rigged modules).
For $C^*$-modules (and hence $W^*$-modules), the structure of
surjective complete isometries follows immediately
from a theorem attributable to Hamana, Kirchberg, and Ruan, independently
\cite[Corollary 4.4.6]{BLM}.   The isometric case is due to Solel \cite{Sol},
who also shows that such
isometries lift to the `linking $C^*$-algebras'.     In \cite{DB3}, the first
author characterized the structure of
complete isometries between the strong Morita equivalence bimodules of \cite{BMP}.
Unfortunately  
the latter class of bimodules does not contain ours, and the proof techniques
used there fail in the `dual' setting, for example it employed the
noncommutative Shilov boundary, which has no
weak* topology variant to date.  We show here how this can be circumvented.
See the introduction for a discussion of the linking algebra (see
\cite[Section 4]{BK1} for more details if desired).

\begin{theorem} \label{nm}  Let $T : Y_1 \to Y_2$ be a surjective
linear complete isometry between weak* Morita equivalence bimodules in the
sense above.
Suppose that $Y_k$ is a weak* equivalence $M_k$-$N_k$-bimodule, for $k = 1,2$.  Then
there exist unique surjective completely
isometric homomorphisms $\theta : M_1 \to M_2$ and $\pi : N_1 \to N_2$
such that $T(ayb) = \theta(a) T(y) \pi(b)$ for all $a \in M_1, b \in N_1, y \in Y_1$.
If $T$ is weak* continuous then so are $\theta$ and $\pi$,
and moreover $T$ is the $1$-$2$-corner of a weak* continuous surjective completely
isometric homomorphism $\rho : {\mathcal L}^{\rm w}(Y_1) \to {\mathcal L}^{\rm w}(Y_2)$
between the weak linking algebras which maps corners to the matching corner.
\end{theorem}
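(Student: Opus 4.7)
The plan is to extend $T$ to a homomorphism $\rho$ between the weak linking algebras, whose $(1,2)$-corner recovers $T$ and whose diagonal corners are the desired $\theta, \pi$; all four corners of the bimodule structure will then be transported in a single stroke. The guiding principle is that, because $Y_k$ is a weak* Morita equivalence bimodule, $M_k$ and $N_k$ are intrinsic invariants of $Y_k$: they appear as its left and right multiplier algebras in the operator-space sense of \cite[Chapter 4]{BLM}, and also via the pairings $Y_k \otimes^{\sigma h}_{N_k} \tilde{Y}_k \cong M_k$ and $\tilde{Y}_k \otimes^{\sigma h}_{M_k} Y_k \cong N_k$ recalled in \cite{BK1, BK2}.

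First I would construct $\theta$ and $\pi$. For $a \in M_1$, the map $T \circ L_a \circ T^{-1} : Y_2 \to Y_2$ (where $L_a$ is left multiplication by $a$) is a complete contraction; by a Banach--Stone-type argument for the Paulsen operator system of $Y_k$ in the spirit of \cite[Corollary 4.4.6]{BLM}, it should be a left multiplier of $Y_2$, hence of the form $L_{\theta(a)}$ for a unique $\theta(a) \in M_2$ (using that the left multiplier algebra of a weak* Morita equivalence bimodule coincides with the acting algebra). The resulting $\theta$ is then a surjective unital completely isometric algebra homomorphism satisfying $T(ay) = \theta(a) T(y)$; $\pi$ is constructed symmetrically via right multipliers. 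Combining the two gives $T(ayb) = \theta(a) T(y) \pi(b)$, and uniqueness follows from faithfulness of the $M_2$-action on $Y_2$. If $T$ is weak* continuous, weak* continuity of $\theta, \pi$ is automatic by the Krein--Smulian theorem applied to the graphs.

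To promote this to a homomorphism $\rho$ of linking algebras, I would define $\tilde{T} : \tilde{Y}_1 \to \tilde{Y}_2$ on the $(2,1)$-corner by $\tilde{T}(x)(y_2) := \theta(x(T^{-1}(y_2)))$, and then verify --- using bifunctoriality of $\otimes^{\sigma h}$ (Theorem \ref{FTM}) and the Morita pairings --- that $\tilde{T}$ is a weak* continuous surjective complete isometry intertwining the bimodule actions via $\theta, \pi$ and respecting the pairings. Setting
$$\rho\begin{pmatrix} a & y \\ x & b \end{pmatrix} = \begin{pmatrix} \theta(a) & T(y) \\ \tilde{T}(x) & \pi(b) \end{pmatrix}$$
then yields a unital weak* continuous homomorphism that is a complete isometry corner-by-corner, which I expect to promote to a complete isometry of the full weak linking algebra by a matrix-norm argument exploiting the assembly of the linking algebra norm from its four corners and the pairings. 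The main obstacle is the very first move in the second paragraph: showing that $T L_a T^{-1}$ is a genuine left multiplier landing in $M_2$. The usual route via the noncommutative Shilov boundary \cite{DB3, Sol} has no weak* analogue, as flagged at the start of this section, and the circumvention relies essentially on exploiting Morita equivalence --- in particular the identification $Y_k \otimes^{\sigma h}_{N_k} \tilde{Y}_k \cong M_k$ --- to build $\theta$ tensorially from $T$ and a putative $\tilde{T}$ in a self-referential but internally consistent way.
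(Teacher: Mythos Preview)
Your outline is essentially the paper's own approach: identify $M_k$ and $N_k$ with the left and right operator-space multiplier algebras of $Y_k$, transport multipliers by conjugation with $T$, define the map on the dual module, and assemble these four pieces into a homomorphism $\rho$ of linking algebras. The places where you hedge, however, are precisely the places where the paper is direct, and your proposed workaround is both unnecessary and not clearly noncircular.

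The step you flag as the ``main obstacle'' --- that $T L_a T^{-1}$ is a genuine left multiplier of $Y_2$ --- is not an obstacle at all. You reach for \cite[Corollary 4.4.6]{BLM}, which is a TRO/Shilov-boundary result, and then correctly observe that Shilov boundaries have no weak* variant. But the right tool is \cite[Proposition 4.5.12]{BLM}: for \emph{any} surjective linear complete isometry $T$ between operator spaces, the map $u \mapsto T u T^{-1}$ is a completely isometric isomorphism ${\mathcal M}_\ell(Y_1) \to {\mathcal M}_\ell(Y_2)$. This is proved from the abstract (matricial) characterization of multipliers and uses no injective envelope or Shilov boundary. Combined with the identifications ${\mathcal M}_\ell(Y_k) \cong M_k$ and ${\mathcal M}_r(Y_k) \cong N_k$ (from \cite[Theorem 2.3]{BK2} and \cite[Theorem 3.6]{BK1}), this gives $\theta$ and $\pi$ immediately, with $T(ay) = \theta(a) T(y)$ falling out of the definition of conjugation. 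Your tensorial ``circumvention,'' building $\theta$ from $T$ and a $\tilde{T}$ that itself depends on $\theta$, is therefore unneeded --- and as you yourself note, it is self-referential; you have not actually broken that circle.

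Two smaller points. First, with the paper's conventions $\tilde{Y}_k = w^*CB(Y_k, N_k)_{N_k}$, so your formula for $\tilde{T}$ should read $\tilde{T}(x)(y_2) = \pi(x(T^{-1}(y_2)))$, not $\theta$. Second, for weak* continuity of $\theta, \pi$ the paper invokes \cite[Theorem 4.7.4]{BLM} (weak* continuity of multipliers on dual operator spaces) rather than Krein--Smulian; and the complete isometry of $\rho$ is obtained not by an ad hoc matrix-norm argument but by the uniqueness of the dual-operator-algebra structure on the weak linking algebra compatible with the given corner norms (see \cite[Section 4]{BK1}).
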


\begin{proof}   We will use the machinery of `multipliers' of operator spaces (see
e.g.\ \cite{DB3}), which hitherto has
been the deepest tool in the theory of $w^*$-rigged modules \cite{BK2}.
From \cite[Theorem 2.3]{BK2}, and \cite[Theorem 3.6]{BK1}, we have that ${\mathcal M}_{\ell}(Y_k) = w^*CB(Y_k)_{N_k}
\cong M_k,$ and similarly ${\mathcal M}_r(Y_k) \cong N_k$, for $k = 1,2$.
By \cite[Proposition 4.5.12]{BLM},  the map ${\mathcal M}_{\ell}(Y_1) \to {\mathcal M}_{\ell}(Y_2) :u \mapsto T u T^{-1}$ is a
 completely
isometric isomomorphism.  It is also weak* continuous if $T$ is, using
\cite[Theorem 4.7.4]{BLM}.    Putting these maps together, we obtain
surjective completely
isometric homomorphisms $\theta : M_1 \to M_2$ and $\pi : N_1 \to N_2$, which
are weak* continuous if $T$ is.
We have $\theta(a)T(y) = T (a T^{-1}( T(y))) = T(ay)$ for $a \in M_1,
y \in Y$ as desired, and a similar
formula holds for $\pi$.  The uniqueness of $\theta, \pi$ is obvious.

Assuming $T$ weak* continuous, and with $X_k = \tilde{Y_k} = w^*CB(Y_k,N_k)_{N_k}$,
define $S : X_1 \to X_2$
by $S(x)(z) = \pi(x(T^{-1}(z)))$, for $x \in X_1,
z \in Y_2$.   It is routine to argue that $S$ is a complete isometry, since
$T$ and $\pi$ are.  Clearly $S(x)(T(y)) = \pi(x(y))$ for $x \in X_1, y \in Y_1$.
It is
simple algebra to check that $S(bxa) = \pi(b) S(x) (a)$, and
$[T(y),S(x)] = \theta([y,x])$.  To see the latter, for example, note that   for $y' \in Y$,
 $$[T(y),S(x)] T(y') = T(y) (S(x),T(y')) = T(y) \pi((x,y')) = T(y (x,y')) =
T([y,x] y'),$$
which is just $\theta([y,x]) T(y').$
Thus in a standard way, the maps under discussion become the four corners
of a surjective homomorphism $\rho$ between the weak linking algebras.   It is
easy to see that $\rho$ is weak* continuous, since each of its four corners
are.   That
$\rho$ is completely
isometric follows by pulling back the operator space
structure from ${\mathcal L}^{\rm w}(Y_2)$ via $\rho$, and
using the fact that there is a unique
operator space structure
on the weak linking algebra making it a dual operator
algebra, such that the
matrix norms on each of the four
corners is just the original norms of the
four spaces appearing in those corners
(see the third paragraph of \cite[Section 4]{BK1},
which appeals to the idea in \cite[p. \ 50--51]{BLM}).
\end{proof}

\medskip

{\bf Acknowledgements:} \  Much of the
content of this paper was found in discussions
between the two authors some months after \cite{BK2} was written, 
and advertized informally then.  The work was
not published at that time since there was another avenue that the authors were exploring
together,
namely a nonselfadjoint variant of some of the theory presented in \cite{Kraus}.
Although many basic aspects of that theory do carry over, some important parts
of that program failed to generalize, and therefore we have decided not to publish
this.


\begin{thebibliography}{99}

\bibitem{AS}   J. Arazy and B. Solel, {\em Isometries of
non-self-adjoint operator algebras,}  J.\ Funct.\ Anal.\
90 (1990), 284--305.

\bibitem{DB2}
D. P. Blecher, \emph{ A generalization of Hilbert modules},  J. Funct.
 Anal.  136  (1996),   365--421.

\bibitem{Bsd}  D. P. Blecher,   {\em On selfdual Hilbert modules,} in:
Operator algebras and their applications,
 Fields Inst.\ Commun., 13, Amer.\ Math.\ Soc., Providence, RI, 1997,  65--80.

\bibitem{DB3}
D. P. Blecher, \emph{
Multipliers, $C\sp \ast$-modules, and algebraic structure in spaces of Hilbert space operators,} in:
 Operator algebras, quantization, and noncommutative geometry,  85--128, Contemp. Math., 365,
Amer. Math. Soc., Providence, RI, 2004.

\bibitem{BK1}
D. P. Blecher and   U. Kashyap,   \emph{ Morita equivalence of dual
operator algebras}, J.\ Pure and Applied Algebra  212 (2008), 2401-2412.


\bibitem{BK2}
D. P. Blecher and   U. Kashyap, {\em A characterization and a
generalization of $W^*$-modules,}  To appear, Trans.\ Amer.\ Math.\ Soc.\


\bibitem{BLM}
D. P. Blecher and C. Le Merdy,  {\em  Operator algebras and their
 modules---an operator space approach,}  London Mathematical Society
 Monographs, Oxford Univ. Press, Oxford, 2004.


\bibitem{BMP}    D. P. Blecher, P. S.  Muhly, and V. I. Paulsen, {\em
Categories of operator modules (Morita equivalence and projective modules)},
Mem.\ Amer.\ Math.\ Soc.\   681  (2000).


 \bibitem{El}
G. K. Eleftherakis,  \emph{A Morita type equivalence for dual
operator algebras},  J.\ Pure Appl.\ Algebra  212  (2008),   1060--1071.



\bibitem{EP}
 G. K. Eleftherakis and V. I. Paulsen, \emph{Stably isomorphic dual
 operator algebras},  Math.\ Ann.\  341  (2008),  99--112.


\bibitem{UK}  U. Kashyap, {\em Morita equivalence of dual operator algebras,} Ph. D.\ thesis (University of Houston),
December 2008.

\bibitem{UK1}  U. Kashyap, {\em A Morita theorem for dual operator algebras},
 J.\ Funct.\ Analysis 256 (2009), 3545-3567.


\bibitem{Kraus}  J. Kraus,  {\em Correspondences and approximation
properties for von Neumann algebras,}  Internat.\ J.\ Math.\
14 (2003),   619--665.


\bibitem{Pas}  W. L. Paschke,  {\em Inner product modules over $B^*$-algebras,}
Trans.\ Amer.\ Math.\ Soc.\  182 (1973), 443--468.

\bibitem{Pn}   V. I. Paulsen,  {\em Completely bounded maps and operator
algebras,}  Cambridge Studies in Advanced Math., 78, Cambridge
University Press, Cambridge, 2002.

\bibitem{Rief}   M. A. Rieffel, {\em
Morita equivalence for  $C^{*}$-algebras and $W^{*}$-algebras}, J.\
Pure Appl.\ Algebra  5  (1974), 51--96.


\bibitem{Sol}  B. Solel, {\em Isometries of Hilbert $C\sp *$-modules,}
  Trans. Amer. Math. Soc.   353 (2001),   4637--4660.




\end{thebibliography}
\end{document}